\newcommand{\n}[1]{\left\|#1 \right\|} 
\renewcommand{\a}{\alpha}
\renewcommand{\b}{\beta}
\newcommand{\la}{\lambda}
\newcommand{\x}{\bar x}
\newcommand{\R}{\mathbb R}
\newcommand{\N}{\mathbb N}
\newcommand{\Z}{\mathbb Z}
\renewcommand {\r}{\rightarrow}
\newcommand{\rr}{\rightharpoonup} 
\newtheorem{theorem}{Theorem} 
\theoremstyle{definition}
\newtheorem{lemma}{Lemma} 
\newtheorem{algorithm}{Algorithm}
\newtheorem{remark}{Remark}
\begin{document}

\title{A hybrid method without extrapolation step for solving variational inequality problems}

\author{Yu. V. Malitsky\thanks{Department of Cybernetics, Taras Shevchenko National University of
Kyiv,  64/13,  Volodymyrska Str.,  Kyiv, 01601,  Ukraine. E-mail: \texttt{y.malitsky@gmail.com}.}\and
V. V. Semenov \thanks{ Department of Cybernetics, Taras Shevchenko National University of Kyiv 
64/13,  Volodymyrska Str.,  Kyiv, 01601, Ukraine. E-mail: \texttt{semenov.volodya@gmail.com}. }}

\maketitle

\begin{abstract} \noindent
In this paper, we introduce a new method for solving variational inequality problems with monotone and Lipschitz-continuous mapping in Hilbert space. The iterative process is based on two well-known projection method and the hybrid (or outer approximation) method. However we do not use an extrapolation step in the projection method. The absence of one projection in our method is explained by slightly different choice of sets in hybrid method. We prove a strong convergence of the sequences generated by our method.
\end{abstract}

{\small
\noindent
{\bfseries 2010 Mathematics Subject Classification:}
{47J20
}
 
\noindent {\bfseries Keywords:}
variational inequality,  monotone mapping,  hybrid method, projection method,  strong con\-vergence.
}
 \section{Introduction}
\label{intro}
We consider the classical variational inequality problem which is to find a point $x^*\in C$ such that
\begin{equation}\label{vip}
(Ax^*, x-x^*)\geq 0 \quad \forall x\in C,
\end{equation}
where $C$ is a closed convex set in Hilbert space $H$, $(\cdot, \cdot)$ denotes the inner product in $H$, and $A\colon H\r H$ is a some mapping.
We assume that the following conditions hold\\
(C1)\quad The solution set of \eqref{vip}, denoted by $S$, is nonempty.\\
(C2)\quad The mapping $A$ is monotone, i.e.,
$$(Ax-Ay,x-y)\geq 0\quad \forall x,y \in C.$$
(C3)\quad  The mapping $A$ is Lipschitz-continuous on $C$ with constant $L>0$, i.e., there exists $L>0$, such that
$$\n{Ax-Ay}\leq L \n{x-y} \quad \forall x,y\in C.$$

Variational inequality theory is an important tool in studying a wide class of obstacle,
unilateral, and equilibrium problems arising in several branches of pure and applied sciences in
a unified and general framework \cite{aubinekeland,baiocchi:84,lions:81,kinderlehrer,konnov2001,nagurney99}. This field is dynamic and is experiencing an explosive growth in both theory and applications. Several numerical methods have been developed for solving variational inequalities
and related optimization problems, see books \cite{baucomb,pang:03,konnov2001} and the references therein. 
In order to construct an algorithm which provides  strong convergence to a solution of \eqref{vip} we propose the following method \\
\textbf{Algorithm 1}
\begin{equation}\label{our}
\begin{cases}
x_0,z_0\in C,\\
z_{n+1} = P_C(x_n-\la Az_n),\\
x_{n+1} = P_{C_n\cap Q_n} x_0,
\end{cases}
\end{equation}
where $P_M$ denotes the metric projection on the set $M$, $\la \in (0,1/2L)$, the sets $C_n$ and $Q_n$ are some halfspaces which we will define  in the Section \ref{algorithm}.

\section{Relation to the previous work}
There are two general approaches to study variational inequality problem under the conditions above:  regularization methods and projection methods. As far as we are concerned, we prioritize the latter; good survey of regularization methods can be found in \cite{bakush,konnov2001}.
 
The oldest algorithm which provides convergence of the generated sequence under the above assumptions is the extragradient method proposed in 1976 by Korpelevich~\cite{korpel:76} and Antipin~\cite{antipin}. A lot of efficient modifications exist at this moment \cite{apostol12,ceng:10,reich:2011,iusem:00,Iusem:97,khobotov:89,lyashko11,nadezhkina:06,solodov:1999,tseng00,voitova11}. The natural question that arises in the case of infinite dimensional Hilbert space is how to construct an algorithm which provides strong convergence.  To answer this question Nadezhkina and Takahashi~\cite{nadezhkina:06} introduced the following method \\
\textbf{Algorithm 2}
\begin{equation}\label{nadezhda}
\begin{cases}
x_0\in C,\\
y_n = P_C(x_n-\la Ax_n),\\
z_n = P_C(x_n-\la Ay_n),\\
C_n=\{w\in C \colon \n{z_n-w}\leq \n{x_n-w}\},\\
Q_n = \{w\in C \colon (x_n-w, x_0- x_n)\geq 0 \},\\
x_{n+1} = P_{C_n\cap Q_n} x_0,
\end{cases}
\end{equation}
where $\la\in (0,1/L)$.
Under the above assumptions (C1)--(C3) they proved that the sequence $(x_n)$, generated by \eqref{nadezhda}, converges strongly to $P_S x_0$. Their method based on the extragradient method  and on the hybrid method, proposed by \cite{nakajo03,solodov00}. The computational complexity of \eqref{nadezhda} on every step is three computation of metric projection and two values of $A$.

Inspired by this scheme, Censor, Gibali and Reich \cite{censor_reich:2011} presented the following algorithm \\
\textbf{Algorithm 3}
\begin{equation}\label{censor_strong}    
\begin{cases}
x_0\in H,\\
y_n = P_C(x_n-\la Ax_n),\\
T_n = \{w\in H \colon (x_n-\la Ax_n-y_n,w-y_n)\leq 0\},\\
z_n = \a_n x_n + (1-\a_n) P_{T_n}(x_n-\la Ay_n),\\
C_n=\{w\in H \colon \n{z_n-w}\leq \n{x_n-w}\},\\
Q_n = \{w\in H \colon (x_n-w, x_0- x_n)\geq 0 \},\\
x_{n+1} = P_{C_n\cap Q_n} x_0,
\end{cases}
\end{equation}
where $\la \in (0,1/L)$ and $(\a_n)\subset [0,\a]$ for some $\a \in [0,1)$.
In contrast to \eqref{nadezhda} the sets $C_n$ and $Q_n$ are halfspaces and hence it is much more simpler to calculate  $P_{C_n\cap Q_n}x_0$ than projection onto the general convex set $C$. Therefore in the next schemes  we will not take into consideration this projection. Also on the second step it is calculated only  a projection onto halfspace $T_n$ but  not onto set $C$ like in \eqref{nadezhda}. However on every step of \eqref{censor_strong} we need to calculate two values of $A$ as well as in \eqref{nadezhda}. 

Using hybrid method it is not difficult to modify Tseng algorithm \cite{tseng00} \\ \textbf{Algorithm 4}
\begin{equation}\label{tseng_strong}
\begin{cases}
x_0\in H,\\
y_n = P_C(x_n-\la Ax_n),\\
z_n =y_n + \la (Ax_n-Ay_n),\\
C_n=\{w\in H \colon \n{z_n-w}\leq \n{x_n-w}\},\\
Q_n = \{w\in H \colon (x_n-w, x_0- x_n)\geq 0 \},\\
x_{n+1} = P_{C_n\cap Q_n} x_0,
\end{cases}
\end{equation}
where $\la \in (0,1/L)$. In computational sense it is similar to \eqref{censor_strong}.

In this work we show that with some other choice of sets $C_n$ it is possible to throw out in \eqref{nadezhda} or in \eqref{censor_strong} the step of extrapolation which consist in $y_n = P_C(x_n - \la Ax_n)$.  We emphasize that in order to prove a convergence of Algorithm 1 we use an idea due to Popov~\cite{popov:80} (the refinement of this idea see also in \cite{mal-sem}). It is easy to see that our method~\eqref{our}  on every iteration needs only one computation of projection (as in \eqref{censor_strong} or \eqref{tseng_strong}) and only one value of $A$. For example,  very often variational inequality problems which arise from optimal control,  provide a very complicated operator such that only computation of the  latter is a very sophisticate problem. (For more details see \cite{lions71}).

\section{Preliminaries}
\label{pre}
In order to prove our main result, we need the following statements (see books \cite{aubinekeland,baiocchi:84,baucomb,kinderlehrer}). At first, the following well-known properties of the projection mapping will be used throughout this paper.
\begin{lemma}\label{proj}
Let $M$ be nonempty closed convex set in $H$, $x \in H$. Then
 \begin{itemize}
  \item[i)] $(P_Mx - x, y-P_Mx)\geq 0\quad \forall y\in M$;
  \item[ii)] $\n{P_Mx-y}^2\leq \n{x-y}^2 - \n{x-P_Mx}^2 \quad \forall y\in M$.
 \end{itemize}
\end{lemma}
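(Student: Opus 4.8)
The plan is to obtain both parts from the variational characterization of the metric projection, namely item i), which I would prove first. Recall that, since $M$ is nonempty, closed and convex and $H$ is complete, $P_Mx$ is well defined as the unique minimizer over $M$ of $z\mapsto\n{z-x}^2$; I would take this for granted, as it is precisely the setting in which $P_M$ is introduced, or else recall it in one line by applying the parallelogram law to a minimizing sequence. To prove i), fix $y\in M$. By convexity, $P_Mx+t(y-P_Mx)\in M$ for every $t\in[0,1]$, so the scalar function
\[
\phi(t)=\n{P_Mx+t(y-P_Mx)-x}^2=\n{P_Mx-x}^2+2t(P_Mx-x,y-P_Mx)+t^2\n{y-P_Mx}^2
\]
attains its minimum on $[0,1]$ at $t=0$. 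Hence $\phi'(0)=2(P_Mx-x,y-P_Mx)\geq 0$, which is exactly i).

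For ii) I would expand the squared norm by inserting $P_Mx$: for every $y\in M$,
\[
\n{x-y}^2=\n{x-P_Mx}^2+2(x-P_Mx,P_Mx-y)+\n{P_Mx-y}^2 .
\]
Since $(x-P_Mx,P_Mx-y)=(P_Mx-x,y-P_Mx)\geq 0$ by i), the cross term is nonnegative, so $\n{x-y}^2\geq\n{x-P_Mx}^2+\n{P_Mx-y}^2$, and rearranging gives ii).

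The whole argument is routine and presents no genuine obstacle; the only point deserving care is the existence and uniqueness of the projection, which relies on the completeness of $H$ together with the closedness and convexity of $M$, and which is already implicit in the statement of the lemma.
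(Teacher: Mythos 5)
Your proof is correct; the paper itself gives no argument for this lemma, simply quoting it as a well-known property of the metric projection with a pointer to standard references, and your derivation (the first-order condition $\phi'(0)\geq 0$ for part i), then expansion of $\n{x-y}^2$ around $P_Mx$ for part ii)) is exactly the standard textbook proof those references contain. No gaps, and your remark that existence and uniqueness of $P_Mx$ rest on completeness, closedness and convexity is the only point that needed flagging.
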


Next two lemmas are also well-known.
\begin{lemma}[Minty]\label{minty}
Assume that $A:C\r H$ is a continuous and monotone mapping. Then $x^*$ is a solution of \eqref{vip} iff $x^*$ is a solution of the following problem
$$\text{find}\, x\in C, \quad \text{such that}\quad (Ay,y-x)\geq 0\quad \forall y\in C.$$
\end{lemma}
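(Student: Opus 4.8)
Lemma (Minty) — proof plan.

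The statement is the classical Minty lemma: for a continuous monotone map $A$ on a closed convex set $C$, the "strong" (Stampacchia) solution $x^*$ of $(Ax^*,x-x^*)\ge 0$ for all $x\in C$ coincides with the "weak" (Minty) solution $(Ay,y-x^*)\ge 0$ for all $y\in C$. The plan is to prove the two implications separately, the easy one using only monotonicity and the harder one using continuity together with the convexity of $C$ via a segment (Minty–Browder) argument.

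\textbf{Direction 1 (strong $\Rightarrow$ weak).} Suppose $x^*\in C$ satisfies $(Ax^*,x-x^*)\ge 0$ for all $x\in C$. Fix an arbitrary $y\in C$. By monotonicity (C2), $(Ay-Ax^*,y-x^*)\ge 0$, hence
$$(Ay,y-x^*)\ge (Ax^*,y-x^*)\ge 0,$$
the last inequality being the hypothesis applied with $x=y$. Since $y\in C$ was arbitrary, $x^*$ solves the Minty problem.

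\textbf{Direction 2 (weak $\Rightarrow$ strong).} Suppose $x^*\in C$ satisfies $(Ay,y-x^*)\ge 0$ for all $y\in C$. Fix $x\in C$ and, for $t\in(0,1]$, set $y_t=(1-t)x^*+tx=x^*+t(x-x^*)$. By convexity $y_t\in C$, so the hypothesis applied at $y=y_t$ gives $(Ay_t,y_t-x^*)\ge 0$, i.e. $t\,(Ay_t,x-x^*)\ge 0$, hence $(Ay_t,x-x^*)\ge 0$ for all $t\in(0,1]$. Now let $t\to 0^+$: since $y_t\to x^*$ and $A$ is continuous, $Ay_t\to Ax^*$, so passing to the limit yields $(Ax^*,x-x^*)\ge 0$. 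As $x\in C$ was arbitrary, $x^*$ solves the variational inequality \eqref{vip}.

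\textbf{Main obstacle.} The only delicate point is Direction 2: one must keep $y_t$ inside $C$ (handled by convexity) and then justify passing to the limit, which requires continuity of $A$ — this is exactly where hypothesis (C3), or mere continuity, is used; monotonicity alone does not suffice here. Everything else is a one-line computation.
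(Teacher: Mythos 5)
Your proof is correct: the easy direction uses only monotonicity, and the segment argument $y_t = x^* + t(x-x^*)$ with $t\to 0^+$ correctly handles the converse via convexity of $C$ and continuity of $A$. The paper itself states this lemma without proof, citing it as well-known; your argument is precisely the classical Minty--Browder proof that the cited references contain, so there is nothing to reconcile.
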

\begin{remark}
The solution set $S$ of variational inequality \eqref{vip}  is closed and convex.
\end{remark}

We write $x_n \rightharpoonup x$ to indicate that the sequence $(x_n)$ converges weakly to $x$. $x_n \to x$ implies that $(x_n)$ converges strongly to $x$. 
\begin{lemma}[Kadec--Klee property of a Hilbert space]\label{kadec}
Let $(x_n)$ be a sequence in $H$. Then from  $\n{x_n}\r \n{x}$ and $x_n\rr x$ follows that $x_n \r x$.
\end{lemma}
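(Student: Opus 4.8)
The plan is to reduce the claim to a single algebraic identity available in any inner-product space. Since $H$ is a Hilbert space, I would start from
\[
\n{x_n - x}^2 = \n{x_n}^2 - 2(x_n, x) + \n{x}^2,
\]
valid for every $n$, and then analyse the behaviour of each of the three terms on the right-hand side.

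For the first term, the hypothesis $\n{x_n}\r\n{x}$ immediately gives $\n{x_n}^2\r\n{x}^2$. For the cross term, I would use that $x_n\rr x$ means by definition $(x_n,y)\r(x,y)$ for every $y\in H$; specialising to the fixed vector $y=x$ yields $(x_n,x)\r(x,x)=\n{x}^2$. The last term is constant.

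Passing to the limit in the displayed identity then gives
\[
\n{x_n-x}^2 \r \n{x}^2 - 2\n{x}^2 + \n{x}^2 = 0,
\]
i.e. $x_n\r x$, which is exactly the assertion.

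I do not expect any genuine obstacle: the proof uses only the polarisation-type expansion of $\n{x_n-x}^2$ and the definition of weak convergence tested against the single limit vector $x$. The subtlety worth flagging is merely conceptual rather than technical — weak convergence alone does not force strong convergence, and it is precisely the extra assumption on the norms that rules out the usual counterexamples (e.g. an orthonormal sequence). This lemma is stated here only so that, later in the argument, once we have shown that the relevant iterates converge weakly and that their norms converge to the norm of the limit, we may conclude strong convergence.
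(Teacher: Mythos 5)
Your argument is correct and is the standard one: expanding $\n{x_n-x}^2 = \n{x_n}^2 - 2(x_n,x) + \n{x}^2$, using weak convergence tested against the single vector $x$ for the cross term and the norm hypothesis for the first term, gives $\n{x_n-x}\to 0$. The paper states this lemma without proof (as a well-known fact, with references to standard texts), so there is nothing to compare against; your proof is exactly the canonical argument one would cite, and it contains no gaps.
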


At last, we prove the following result.
\begin{lemma} \label{lema_tech}
 Let $(a_n)$, $(b_n)$, $(c_n)$ be nonnegative real sequences, $\a, \b\in \R$  and for all $n\in \N$ the following inequality holds
  \begin{equation}\label{tech_ineq}
a_n \leq b_n - \a c_{n+1} +\b c_n.
  \end{equation}
If $\sum_{n=1}^\infty b_n< +\infty$ and $\a> \b \geq 0$ then $\lim_{n\r \infty} a_n = 0$.
\end{lemma}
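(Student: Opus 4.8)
The plan is to sum the recursion \eqref{tech_ineq} over $n$ and exploit a telescoping cancellation among the $c_n$-terms. The point is that since every $a_n$ is nonnegative, it is enough to bound the partial sums $\sum_{n=1}^N a_n$ uniformly in $N$; once this is done, $\sum_{n=1}^\infty a_n<+\infty$ and in particular $a_n\r 0$.

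First I would add the inequalities \eqref{tech_ineq} for $n=1,\dots,N$, obtaining
\[
\sum_{n=1}^N a_n\leq \sum_{n=1}^N b_n-\a\sum_{n=1}^N c_{n+1}+\b\sum_{n=1}^N c_n .
\]
Re-indexing $\sum_{n=1}^N c_{n+1}=\sum_{n=2}^{N+1}c_n$, the two $c$-sums combine into $\b c_1-\a c_{N+1}+(\b-\a)\sum_{n=2}^N c_n$. Here the hypotheses $\a>\b\geq 0$ and $c_n\geq 0$ are exactly what is needed: $-\a c_{N+1}\leq 0$ and $(\b-\a)\sum_{n=2}^N c_n\leq 0$, so this expression is at most $\b c_1$. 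Hence $\sum_{n=1}^N a_n\leq \sum_{n=1}^N b_n+\b c_1\leq \sum_{n=1}^\infty b_n+\b c_1$ for every $N$.

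Letting $N\r\infty$ gives $\sum_{n=1}^\infty a_n\leq \sum_{n=1}^\infty b_n+\b c_1<+\infty$, so the series $\sum a_n$ converges and therefore $a_n\r 0$. I do not expect a genuine obstacle: the only things to watch are the index bookkeeping in the telescoping step and the observation that it is precisely the one-sided condition $\a\geq\b$ (rather than $\a$ and $\b$ merely being of comparable size) that lets the $c_n$-contributions drop out, so that no a priori information — boundedness or summability — about the sequence $(c_n)$ is required.
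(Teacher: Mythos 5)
Your proposal is correct and follows essentially the same argument as the paper: sum \eqref{tech_ineq} over $n=1,\dots,N$, re-index so the $c_n$-terms telescope into $\b c_1 - \a c_{N+1} + (\b-\a)\sum_{n=2}^N c_n \leq \b c_1$, and conclude that $\sum_{n=1}^\infty a_n \leq \sum_{n=1}^\infty b_n + \b c_1 < +\infty$, hence $a_n \r 0$. No differences worth noting.
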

\begin{proof}
   Using inequality \eqref{tech_ineq} for $n=1, n=2, \dots, n=N$ we obtain
  \begin{gather*}
    a_1 \leq b_1 - \a c_{2} + \b c_1,\\
    a_{2} \leq b_{2} - \a c_{3} +\b c_{2},\\
    \dots \\
    a_{N} \leq b_N  - \a c_{N+1} + \b c_N.
  \end{gather*}
Adding all these inequalities yields
\begin{equation*}
  \sum_{n=1}^N a_n \leq \sum_{n=1}^N b_n -(\a-\b)\sum_{n=2}^N c_n -\a c_{N+1} + \b c_1  \leq \sum_{n=1}^\infty b_n  + \b c_1.
\end{equation*}
Since $N$ is arbitrary, we see that series $\sum_{n=1}^\infty a_n$ is convergent and hence $a_n \to 0$. \qed
\end{proof}

\section{Algorithm and its convergence}
\label{algorithm}
Now we formally state our algorithm.
\begin{algorithm}[Hybrid algorithm without extrapolation step]\label{alg}

    \noindent 
  \begin{enumerate}
  \item Choose $x_0, z_0 \in C$ and two parameters $k>0$ and $\la > 0$.
  \item\label{step} Given the current iterate $x_n$ and $z_n$, compute
\begin{equation}
\label{z_n} z_{n+1}  = P_C(x_n-\la Az_n).
\end{equation}
If $ z_{n+1}= x_n = z_n$ then stop. Otherwise, construct sets $C_n$ and $Q_n$ as
\begin{equation}\label{C_n}
\begin{split}
  C_0 & = H,  \\
C_n & = \bigl\{ w\in H\colon \n{z_{n+1}-w}^2  \leq \n{x_n-w}^2+k\n{x_n-x_{n-1}}^2  \\ & - (1-\frac 1 k -\la L )\n{z_{n+1}-z_n}^2 + \la L \n{z_n-z_{n-1}}^2\bigr\}, \quad n\geq 1, \\
Q_0 &= H, \\
Q_n &  = \{w\in H \colon (x_n-w, x_0-x_n)\geq 0\}, \quad n\geq 1,
\end{split}
\end{equation}
and calculate
\begin{equation}
x_{n+1} = P_{C_n\cap Q_n}x_0.
\end{equation}
\item Set $n \leftarrow n+1$ and return to step \ref{step}.
  \end{enumerate}
\end{algorithm}
We remark that sets $C_n$ look a slightly complicated in contrast to \eqref{censor_strong}. However it is only for superficial examination, for a computation it does not matter. In \eqref{C_n} and  in \eqref{nadezhda} both $C_n$ are some halfspaces.

First we show that the stopping criterion in Algorithm \ref{alg} is valid.
\begin{lemma}
  If $z_{n+1} = x_n = z_n $ in Algorithm \ref{alg} then $x_n \in S$.
\end{lemma}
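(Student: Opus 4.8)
The plan is to use nothing beyond the defining variational inequality of the metric projection, Lemma~\ref{proj}(i). Suppose $z_{n+1}=x_n=z_n$ and denote this common value by $x$. Then $x=z_n\in C$ (since $z_0\in C$ and each subsequent $z_{n+1}$ is a projection onto $C$), and from the update rule \eqref{z_n},
$$x = z_{n+1} = P_C(x_n-\la A z_n) = P_C(x-\la A x),$$
so $x$ is a fixed point of the projected-residual map $u\mapsto P_C(u-\la Au)$.

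Next I would invoke Lemma~\ref{proj}(i) with $M=C$ and the point $x-\la Ax\in H$: for every $y\in C$,
$$\bigl(P_C(x-\la Ax)-(x-\la Ax),\ y-P_C(x-\la Ax)\bigr)\geq 0.$$
Substituting $P_C(x-\la Ax)=x$ makes the last factor $y-x$ and collapses the first factor to $\la Ax$, which gives $(\la Ax,\ y-x)\geq 0$ for all $y\in C$. Dividing by $\la>0$ yields $(Ax,y-x)\geq 0$ for all $y\in C$, i.e. $x=x_n$ solves \eqref{vip}, so $x_n\in S$.

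There is essentially no obstacle here: this is the classical identification of solutions of \eqref{vip} with fixed points of the projected-residual map, and the argument uses neither monotonicity (C2) nor Lipschitz continuity (C3) — only $\la>0$ and the obtuse-angle property of the projection. The one point worth stating explicitly is that $x=z_n$ lies in $C$, so that Lemma~\ref{proj}(i) may legitimately be applied with the test point $y$ ranging over $C$.
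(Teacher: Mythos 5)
Your argument is correct and coincides with the paper's proof: the paper likewise applies Lemma~\ref{proj}(i) to $z_{n+1}=P_C(x_n-\la Az_n)$ with $z_{n+1}=x_n=z_n$, obtains $\la(Ax_n,y-x_n)\geq 0$ for all $y\in C$, and divides by $\la>0$. Your write-up merely makes the substitution step and the membership $x_n\in C$ explicit, which the paper leaves implicit.
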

\begin{proof}
From \eqref{z_n} and Lemma \ref{proj} follows
$$\la( Ax_n, y - x_n)\geq 0\quad \forall y\in C.$$
Since $\la >0$, we have $x_n\in S$.
\qed
\end{proof}

Next lemma is central to our proof of the convergence theorem.
\begin{lemma}\label{main_lemma}
  Let $(x_n)$ and $(z_n)$ be two sequences generated by Algorithm \ref{alg} and let $z\in S$. Then
\begin{multline}
\n{z_{n+1}-z}^2  \leq \n{x_n-z}^2+k\n{x_n-x_{n-1}}^2  \\ -(1-\frac 1 k -\la L )\n{z_{n+1}-z_n}^2 + \la L \n{z_n-z_{n-1}}^2 .
\end{multline}
\end{lemma}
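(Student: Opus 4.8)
The plan is to estimate $\n{z_{n+1}-z}^2$ directly from the projection formula $z_{n+1}=P_C(x_n-\la Az_n)$ using Lemma~\ref{proj}(ii) with the point $y=z\in S\subset C$. This gives
\[
\n{z_{n+1}-z}^2 \le \n{x_n-\la Az_n-z}^2 - \n{x_n-\la Az_n-z_{n+1}}^2,
\]
and expanding both squares the terms $\n{x_n-\la Az_n}^2$ cancel, leaving
\[
\n{z_{n+1}-z}^2 \le \n{x_n-z}^2 - \n{x_n-z_{n+1}}^2 + 2\la(Az_n,z-z_{n+1}).
\]
Then I would split the inner product as $(Az_n,z-z_{n+1}) = (Az_n,z-z_n) + (Az_n,z_n-z_{n+1})$. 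For the first piece, monotonicity of $A$ gives $(Az_n,z-z_n)\le(Az,z-z_n)\le 0$ since $z\in S$ (using $z_n\in C$), so it may be dropped. The remaining work is to control $-\n{x_n-z_{n+1}}^2 + 2\la(Az_n,z_n-z_{n+1})$.

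For the second inner product I would bring in Popov's trick: compare $z_{n+1}=P_C(x_n-\la Az_n)$ with $z_n=P_C(x_{n-1}-\la Az_{n-1})$ via Lemma~\ref{proj}(i). Writing the variational characterization of $z_{n+1}$ tested at $z_n\in C$ and of $z_n$ tested at $z_{n+1}\in C$, then adding, produces an estimate of the form $\la(Az_n-Az_{n-1},z_n-z_{n+1}) \ge (x_n-x_{n-1}-(z_n-z_{n+1}),\,z_{n+1}-z_n)$ up to sign bookkeeping; this lets me replace the ``bad'' term $Az_n$ by the difference $Az_{n-1}-Az_n$ plus recyclable projection terms. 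After rearranging, the term $\la(Az_{n-1}-Az_n,z_n-z_{n+1})$ is bounded using Cauchy--Schwarz and the Lipschitz condition (C3) by $\la L\n{z_{n-1}-z_n}\n{z_n-z_{n+1}}$, and then the elementary inequality $ab\le\tfrac12 a^2+\tfrac12 b^2$ splits this into $\tfrac{\la L}{2}\n{z_n-z_{n-1}}^2 + \tfrac{\la L}{2}\n{z_{n+1}-z_n}^2$ — though to match the stated constants $\la L\n{z_n-z_{n-1}}^2$ and the coefficient of $\n{z_{n+1}-z_n}^2$ one likely uses the weighted version $ab\le\tfrac{t}{2}a^2+\tfrac{1}{2t}b^2$ or keeps $\la L$ without halving on one side; I would tune the weight to land exactly on the claimed coefficients. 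The terms $\n{x_n-z_{n+1}}^2$, $\n{x_n-z_n}^2$ (or rather cross terms $2(x_n-x_{n-1},z_{n+1}-z_n)$) get absorbed, with $2(x_n-x_{n-1},z_{n+1}-z_n)\le k\n{x_n-x_{n-1}}^2+\tfrac1k\n{z_{n+1}-z_n}^2$ supplying the $k\n{x_n-x_{n-1}}^2$ and the $\tfrac1k$ contribution to the $\n{z_{n+1}-z_n}^2$ coefficient.

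The main obstacle is the careful bookkeeping in the second paragraph: getting the two applications of Lemma~\ref{proj}(i) to combine so that every leftover scalar product is either nonnegative (hence droppable) or is one of the four quadratic quantities $\n{x_n-z}^2$, $\n{x_n-x_{n-1}}^2$, $\n{z_{n+1}-z_n}^2$, $\n{z_n-z_{n-1}}^2$ after a single Cauchy--Schwarz/Young step, with the constants $k$, $\tfrac1k$, $\la L$ falling out in exactly the pattern $1-\tfrac1k-\la L$ and $\la L$ stated in the lemma. A secondary point to watch is that the estimate must hold for $n\ge 1$ with $x_{n-1}$, $z_{n-1}$ well-defined, and that $z\in S$ is used both through Minty's lemma / monotonicity (to kill $(Az,z-z_n)$) and through $z\in C$ (to legitimately test the projection inequalities at $z$); once the combination is arranged, collecting terms yields the claimed multiline inequality.
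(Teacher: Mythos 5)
Your first paragraph reproduces the paper's opening exactly: Lemma \ref{proj}(ii) at $y=z$, then dropping $2\la(Az_n,z-z_n)\le 0$ via monotonicity and $z\in S$, leaving $-\n{x_n-z_{n+1}}^2+2\la(Az_n,z_n-z_{n+1})$ to control. The genuine gap is in how you propose to execute Popov's trick. Adding the characterization of $z_{n+1}$ tested at $z_n$ to that of $z_n$ tested at $z_{n+1}$ gives
\[
(x_n-x_{n-1},z_n-z_{n+1})-\la(Az_n-Az_{n-1},z_n-z_{n+1})+\n{z_n-z_{n+1}}^2\le 0,
\]
in which the absolute operator values have cancelled: this only lower-bounds $\la(Az_n-Az_{n-1},z_n-z_{n+1})$ (equivalently it yields $\n{z_{n+1}-z_n}\le\n{x_n-x_{n-1}}+\la L\n{z_n-z_{n-1}}$), and it cannot give an upper bound on the bad term $2\la(Az_n,z_n-z_{n+1})$, because after splitting off the Lipschitz difference the piece that must be controlled is $2\la(Az_{n-1},z_n-z_{n+1})$, which no longer occurs in the summed inequality. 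Moreover, the characterization of $z_{n+1}$ tested at $z_n$ goes the wrong way for your purpose (it lower-bounds $\la(Az_n,z_n-z_{n+1})$), so summing discards exactly the information you need. The paper uses only the other characterization, $(x_{n-1}-\la Az_{n-1}-z_n,\,z_{n+1}-z_n)\le 0$, i.e. $2\la(Az_{n-1},z_n-z_{n+1})\le 2(x_{n-1}-z_n,z_n-z_{n+1})$, after writing $Az_n=Az_{n-1}+(Az_n-Az_{n-1})$.

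What you defer as ``careful bookkeeping'' is where the stated coefficients actually come from, so it cannot be waved through: one expands $\n{x_n-z_{n+1}}^2$ around the two intermediate points $x_{n-1}$ and $z_n$, which makes $-\n{x_{n-1}-z_n}^2$ and $-\n{z_{n+1}-z_n}^2$ available and lets the projection term $2(x_{n-1}-z_n,z_n-z_{n+1})$ cancel against the corresponding cross term; then $2(x_n-x_{n-1},x_{n-1}-z_{n+1})\le\n{x_n-x_{n-1}}^2+\n{x_{n-1}-z_n}^2+k\n{x_n-x_{n-1}}^2+\frac1k\n{z_{n+1}-z_n}^2$ and $2\la(Az_n-Az_{n-1},z_{n+1}-z_n)\le\la L\bigl(\n{z_{n+1}-z_n}^2+\n{z_n-z_{n-1}}^2\bigr)$ finish the estimate. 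Your worry about tuning a Young weight is unnecessary: the term carries the factor $2\la$, so the plain inequality $2ab\le a^2+b^2$ lands exactly on $\la L$ on each side, giving the pattern $-(1-\frac1k-\la L)$ and $+\la L$. In short, the route is the paper's, but the pivotal step as you describe it would stall, and the quantitative part that produces the claimed constants is left undone.
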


\begin{proof}
By Lemma \ref{proj}  we have
\begin{multline}\label{fst}
  \n{z_{n+1}-z}^2 \leq \n{x_n-\la Az_n -z}^2 - \n{x_n-\la Az_n-z_{n+1}}^2  \\ = \n{x_n-z}^2 - \n{x_n-z_{n+1}}^2 -
 2\la (Az_n,z_{n+1}-z).
  \end{multline}
Since $A$ is monotone and $z\in S$, we see that $$(Az_n, z_n- z)\geq 0.$$ Thus, adding $2\la(Az_n, z_n- z)$ to the right side of \eqref{fst} we get
\begin{multline}\label{snd}
  \n{z_{n+1}-z}^2 \leq  \n{x_n-z}^2 - \n{x_n-z_{n+1}}^2 -  2\la (Az_n,z_{n+1}-z_n)  \\ = \n{x_n-z}^2 - \n{x_n-x_{n-1}}^2 - 2(x_n-x_{n-1}, x_{n-1}-z_{n+1}) \\ - \n{x_{n-1}-z_{n+1}}^2   -  2\la (Az_n,z_{n+1}-z_n) = \n{x_n-z}^2 \\ - \n{x_n-x_{n-1}}^2 - 2(x_n-x_{n-1}, x_{n-1}-z_{n+1})   -\n{x_{n-1}-z_n}^2 \\ - \n{z_n-z_{n+1}}^2 - 2(x_{n-1}-z_n,z_n-z_{n+1})  - 2\la (Az_n-Az_{n-1},z_{n+1}-z_n) \\ - 2\la (Az_{n-1},z_{n+1}-z_n) = \n{x_n-z}^2 - \n{x_n-x_{n-1}}^2  \\ - 2(x_n-x_{n-1}, x_{n-1}-z_{n+1}) - \n{x_{n-1}-z_n}^2 - \n{z_n-z_{n+1}}^2 \\  - 2\la(Az_n-Az_{n-1}, z_{n+1}-z_n) + 2(x_{n-1}-\la Az_{n-1}- z_n,z_{n+1}-z_{n}).
\end{multline}
As $z_{n} =  P_C(x_{n-1}-\la Az_{n-1})$ and $z_{n+1}\in C$ we have
\begin{equation}\label{eq:1}
(x_{n-1}-\la Az_{n-1}- z_n,z_{n+1}-z_{n}) \leq 0.
\end{equation}
Using the triangle, the Cauchy-Schwarz,  and the Cauchy inequalities we obtain
\begin{multline}\label{eq:2}
2(x_n-x_{n-1}, x_{n-1}-z_{n+1})\leq 2\n{x_n-x_{n-1}}\n{x_{n-1}-z_{n}}  +2\n{x_{n}-x_{n-1}}  \n{z_{n}-z_{n+1}}  \\ \leq
\n{x_n-x_{n-1}}^2 + \n{x_{n-1}-z_{n}}^2  + k\n{x_n-x_{n-1}}^2 + \frac 1 k \n{z_{n+1}-z_n}^2.
\end{multline}
Since $A$ is Lipschitz-continuous, we get
\begin{multline}\label{eq:3}
2\la (Az_n-Az_{n-1}, z_{n+1}-z_n) \leq 2 \la L\n{z_n-z_{n-1}} \n{z_{n+1}-z_{n}}  \\ \leq  \la L(\n{z_{n+1}-z_n}^2 + \n{z_n-z_{n-1}}^2).
\end{multline}
Combining  inequalities \eqref{snd}~--~\eqref{eq:3}, we see that
\begin{multline*}
  \n{z_{n+1}-z}^2 \leq  \n{x_n-z}^2 + k\n{x_n-x_{n-1}}^2  -  (1-\frac 1 k -\la L)\n{z_{n+1}-z_n}^2 +  \la L \n{z_{n}-z_{n-1}}^2,
\end{multline*}
which completes the proof.
\qed
\end{proof}

Now we can state and prove our main convergence result.
\begin{theorem}\label{th_1}
Assume that (C1)--(C3) hold and let  $\la \in (0,\frac 1 {2L})$, $k>\frac{1}{1-2\la L}$. Then the sequences $(x_n)$ and $(z_n)$ generated by Algorithm \ref{alg} converge strongly to $P_S x_0$.
\end{theorem}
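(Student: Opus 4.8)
The plan is to follow the standard hybrid (outer approximation) template, but leveraging the special telescoping structure of the sets $C_n$ in \eqref{C_n}. First I would verify the two structural facts needed to make the method well-defined: that each $C_n$ is a halfspace (it is, since after expanding $\n{z_{n+1}-w}^2$ and $\n{x_n-w}^2$ the quadratic terms in $w$ cancel, leaving an affine inequality in $w$), and that $S\subseteq C_n\cap Q_n$ for all $n$. The inclusion $S\subseteq C_n$ is exactly Lemma~\ref{main_lemma}. The inclusion $S\subseteq Q_n$ is proved by induction: $Q_0=H$, and if $S\subseteq C_{n-1}\cap Q_{n-1}$ then $x_n=P_{C_{n-1}\cap Q_{n-1}}x_0$, so by Lemma~\ref{proj}(i) applied to $M=C_{n-1}\cap Q_{n-1}$ we get $(x_0-x_n,w-x_n)\le 0$ for all $w$ in that set, hence for all $w\in S$, which is precisely $w\in Q_n$. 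Consequently $C_n\cap Q_n$ is a nonempty closed convex set and $x_{n+1}=P_{C_n\cap Q_n}x_0$ is well-defined.

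Next I would establish boundedness of $(x_n)$ and the key consequence that $\n{x_n-x_{n-1}}\to 0$. From $x_n\in Q_n$ we have $(x_n-z,x_0-x_n)\ge 0$ for any fixed $z\in S$ (using $S\subseteq Q_n$... more precisely, using that $x_n\in Q_n$ means the halfspace condition holds for all $w$, including $w=z$), which gives $\n{x_n-x_0}^2\le (x_0-z,x_0-x_n)\le \n{x_0-z}\n{x_0-x_n}$, so $\n{x_n-x_0}\le\n{x_0-z}$; thus $(x_n)$ is bounded. Moreover, since $x_{n+1}\in C_n\cap Q_n\subseteq Q_n$ and $x_n=P_{Q_n}x_0$ (because $Q_n$ is the halfspace through $x_n$ with the correct orientation — here one checks $x_n\in C_{n-1}\cap Q_{n-1}$ so the projection characterization gives $x_n=P_{Q_n}x_0$ actually needs a short argument), we get $\n{x_{n+1}-x_0}^2\ge \n{x_n-x_0}^2+\n{x_{n+1}-x_n}^2$ via Lemma~\ref{proj}(ii). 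Hence $\n{x_n-x_0}$ is nondecreasing and bounded, so it converges, and therefore $\sum\n{x_{n+1}-x_n}^2<\infty$, in particular $\n{x_{n+1}-x_n}\to 0$.

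Now comes the part I expect to be the main obstacle: extracting $\n{z_{n+1}-z_n}\to 0$ and then $\n{z_{n+1}-x_n}\to 0$. Since $x_{n+1}\in C_n$, the defining inequality of $C_n$ with $w=x_{n+1}$ reads
\begin{equation*}
\n{z_{n+1}-x_{n+1}}^2\le \n{x_n-x_{n+1}}^2+k\n{x_n-x_{n-1}}^2-(1-\tfrac1k-\la L)\n{z_{n+1}-z_n}^2+\la L\n{z_n-z_{n-1}}^2.
\end{equation*}
I would feed this, together with Lemma~\ref{main_lemma} (applied with $z=P_Sx_0$, giving a bound on $\n{z_{n+1}-z}^2$), into Lemma~\ref{lema_tech}. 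The hypotheses $\la<\tfrac1{2L}$ and $k>\tfrac1{1-2\la L}$ are precisely what make $\a:=1-\tfrac1k-\la L>\la L=:\b\ge 0$, so Lemma~\ref{lema_tech} with $a_n$ a suitable nonnegative combination (and $b_n$ absorbing the $k\n{x_n-x_{n-1}}^2$ and $\n{x_n-x_{n+1}}^2$ terms, which are summable) yields $\n{z_{n+1}-z_n}\to 0$, and then from the displayed inequality also $\n{z_{n+1}-x_{n+1}}\to 0$, hence $\n{z_n-x_n}\to 0$ and $\n{z_{n+1}-x_n}\to 0$.

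Finally I would close the argument. Let $x^*$ be any weak cluster point of $(x_n)$ (it exists by boundedness), say $x_{n_j}\rr x^*$; then $z_{n_j}\rr x^*$ and $z_{n_j+1}\rr x^*$ as well. From $z_{n+1}=P_C(x_n-\la Az_n)$ and Lemma~\ref{proj}(i) we get, for every $y\in C$,
\begin{equation*}
(x_n-\la Az_n-z_{n+1},\,y-z_{n+1})\le 0,
\end{equation*}
i.e. $\tfrac1\la(x_n-z_{n+1},y-z_{n+1})+(Az_n,z_{n+1}-y)\le 0$; using $\n{x_n-z_{n+1}}\to 0$, the Lipschitz continuity of $A$, and passing to the limit along $n_j$ gives $(Az_n, y-z_{n+1})\ge \tfrac1\la(x_n-z_{n+1},y-z_{n+1})$, and after rearranging and using monotonicity ($(Ay-Az_n,z_n-y)\le 0$... ) one obtains $(Ay,y-x^*)\ge 0$ for all $y\in C$; by Lemma~\ref{minty}, $x^*\in S$. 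Then, since $\n{x_n-x_0}$ is nondecreasing and $x_n=P_{Q_n}x_0$ with $P_Sx_0\in Q_n$, a standard estimate gives $\limsup_n\n{x_n-x_0}\le\n{P_Sx_0-x_0}$; combined with weak lower semicontinuity of the norm applied along $x_{n_j}\rr x^*\in S$, this forces $\n{x^*-x_0}=\n{P_Sx_0-x_0}$, so $x^*=P_Sx_0$ by uniqueness of the nearest point. Since the weak cluster point is unique and $\n{x_{n_j}-x_0}\to\n{x^*-x_0}$, the Kadec--Klee property (Lemma~\ref{kadec}) upgrades weak convergence to strong: $x_n\to P_Sx_0$, and then $z_n\to P_Sx_0$ because $\n{z_n-x_n}\to 0$. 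The delicate points to get right are the identification $x_n=P_{Q_n}x_0$ and the precise bookkeeping in the limit passage for the variational inequality, which is why that step is the crux.
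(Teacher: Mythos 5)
Your proposal is correct and follows essentially the same route as the paper: $S\subseteq C_n\cap Q_n$ via Lemma~\ref{main_lemma} and induction, $x_n=P_{Q_n}x_0$ giving summability of $\n{x_{n+1}-x_n}^2$, the $C_n$-membership inequality at $w=x_{n+1}$ combined with Lemma~\ref{lema_tech} (with the same $\a=1-\frac1k-\la L>\b=\la L$) to get $\n{z_n-x_n}\to 0$, Minty's lemma for weak cluster points, and Kadec--Klee plus the minimality of $P_Sx_0$ to conclude strong convergence. The only deviations (deducing $\n{z_{n+1}-z_n}\to 0$ before $\n{z_{n+1}-x_{n+1}}\to 0$, and identifying $x^*=P_Sx_0$ by uniqueness of the nearest point rather than the paper's inner-product estimate) are cosmetic and harmless.
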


\begin{proof}
It is evident that sets $C_n$ and $Q_n$ are closed and convex. By Lemma \ref{main_lemma} we have that $S\subseteq C_n$ for all $n\in \Z^+$. Let us show by mathematical induction that $S\subseteq Q_n$ for all $n\in \Z^+$. For $n=0$ we have $Q_0 = H$. Suppose that $S \subseteq Q_n$. It is sufficient to show that $S \subseteq Q_{n+1}$. Since  $x_{n+1}=P_{C_n\cap Q_n}x_0$ and $S\subseteq C_n\cap Q_n$, it follows that $(x_{n+1}-z, x_0-x_{n+1})\geq 0$ $\forall z\in S$. From this by the definition of $Q_n$ we conclude that $z\in Q_{n+1}$ $\forall z \in S$. Thus, $S\subseteq Q_{n+1}$ and hence $S\subseteq C_n\cap Q_n $ for all $ n\in \Z^+$. For this reason the sequence $(x_n)$ is defined correctly.

Let $\x = P_S x_0$. Since $x_{n+1}\in C_n\cap Q_n$ and $\x \in S\subseteq C_n\cap Q_n$, we have
\begin{equation} \label{bounded}
 \n{x_{n+1} - x_0}\leq \n{\x-x_0}.
\end{equation}
Therefore, $(x_n)$ is bounded. From $x_{n+1}\in C_n\cap Q_n\subseteq Q_n$ and $x_n = P_{Q_n}x_0$, we obtain
\begin{equation*}
\n{x_n-x_0}\leq \n{x_{n+1}-x_0}.
\end{equation*}
Hence, there exists $\lim_{n\r \infty}\n{x_n-x_0}$. In addition, since $x_n = P_{Q_n}x_0$ and $x_{n+1}\in Q_n$ by Lemma \ref{proj} we have
\begin{equation}\label{key}
\n{x_{n+1}-x_n}^2 \leq \n{x_{n+1}-x_0}^2 - \n{x_n-x_0}^2.
\end{equation}
From this it may be concluded that series $\sum_{n=1}^\infty \n{x_{n+1}-x_n}^2$ is convergent. In fact, from \eqref{key} and \eqref{bounded} we obtain
\begin{multline*}
\sum_{n=1}^N \n{x_{n+1}-x_n}^2\leq \sum_{n=1}^N (\n{x_{n+1}-x_0}^2 - \n{x_n-x_0}^2) \leq \n{\x-x_0}^2 - \n{x_1-x_0}^2.
\end{multline*}

Since $x_{n+1}\in C_n$, we see that
\begin{multline*}
  \n{z_{n+1}-x_{n+1}}^2 \leq  \n{x_{n+1}-x_{n}}^2 + k\n{x_n-x_{n-1}}^2 \\ -  (1-\frac 1 k -\la L)\n{z_{n+1}-z_n}^2 +  \la L \n{z_{n}-z_{n-1}}^2.
\end{multline*}
Set
\begin{gather*}
 a_n = \n{z_{n+1}-x_{n+1}}^2, \quad
 b_n = \n{x_{n+1}-x_n}^2 + k\n{x_n-x_{n-1}}^2, \\
 c_n =\n{z_{n}-z_{n-1}}^2, \quad
\a = (1-\frac 1 k -\la L), \quad
 \b = \la L.
\end{gather*}
By Lemma \ref{main_lemma} since $\sum_{n=1}^{\infty} b_n <+\infty $ and $\a > \b$,
\begin{equation*}
  \lim_{n\r \infty}\n{z_{n}-x_{n}} = 0.
\end{equation*}
For this reason $(z_n)$ is bounded and
$$\n{z_{n+1}-z_{n}}\leq \n{z_{n+1}-x_{n+1}} + \n{x_{n+1}-x_n} + \n{x_n-z_n} \r 0.$$
As $(x_n)$ is bounded there exist a subsequence $(x_{n_i})$ of $(x_n)$ such that $(x_{n_i})$ converges weakly to some  $x^*\in H$. We  show $x^* \in S$.

From \eqref{z_n} by Lemma \ref{proj} it follows that
$$(z_{n_i+1}-x_{n_i} + \la Az_{n_i}, y-z_{n_i+1})\geq 0\quad \forall y\in C.$$
This is equivalent to
\begin{multline}\label{weak}
0\leq (z_{n_i+1}-z_{n_i} + z_{n_i}-x_{n_i}, y-z_{n_i+1})+\la (Az_{n_i}, y-z_{n_i}) \\ + \la(Az_{n_i}, z_{n_i}-z_{n_i+1}) \leq
(z_{n_i+1}-z_{n_i},y-z_{n_i+1}) + (z_{n_i}-x_{n_i}, y-z_{n_i+1}) \\ + \la (Ay, y-z_{n_i}) + \la(Az_{n_i}, z_{n_i}-z_{n_i+1}) \quad \forall y\in C.
\end{multline}
In the last inequality we used monotonicity of $A$. 

Taking the limit as $i\r \infty $ in \eqref{weak} and using that $z_{n_i}\rr x^*$, we obtain
$$0\leq (Ay, y-x^*) \quad \forall y\in C,$$
which implies by Lemma \ref{minty} that $x^* \in S$.

Let us show $x_{n_i}\r x^*$.  From $\x = P_S x_0$ and $x^*\in S$ it follows that
$$\n{\x - x_0}\leq \n{x^* - x_0}\leq \liminf_{i\r \infty}\n{x_{n_i}-x_0} = \lim_{i\r \infty}\n{x_{n_i}-x_0}\leq \n{\x - x_0}.$$
Thus, $$\lim_{i\r \infty}\n{x_{n_i}-x_0} = \n{x^* - x_0}.$$
From this and $x_{n_i}-x_0 \rr x^*-x_0$ by Lemma \ref{kadec} we can conclude that $x_{n_i}-x_0 \r x^*-x_0$. Therefore, $x_{n_i}\r x^*$.

Next, we have
$$\n{x_{n_i}-\x}^2 = (x_{n_i}-x_0, x_{n_i}-\x) + (x_0-\x, x_{n_i}-\x)\leq (x_0-\x, x_{n_i}-\x). $$
As $i\r \infty$, we obtain
$$\n{x^*-\x}^2 \leq (x_0-\x, x^*-\x)\leq 0.$$
Hence we have $x^* = \x$. Since the subsequence $(x_{n_i})$ was arbitrary, we see that $x_n\r \x$. It is clear that $z_n \r \x$. \qed
\end{proof}

\section{Computational Experience}
In this section we compare the performance of  Algorithms 1, 3 and 4 which provide a strong convergence in Hilbert space. We have already noted that the main advantage of our algorithm  is a computation on every step only one value of operator. But we will show that it has competitive performance even on simple examples where the previous reasoning has no impact.  Of course all examples are considered in $\R^m$, therefore there is no sense to use any of algorithms 1,2,3,4 to obtain the solution of variational inequality. However there are many problems that arise in infinite dimensional spaces. In such problems norm convergence is often much more desirable than weak convergence (see \cite{baucomb_weak} and references therein). For this reason strong algorithms 1,2,3,4, can be better than  extragradient algorithms that provide weak convergence. Another reason to study their convergence is an academic interest.  

Although Algorithm 2 gave birth to all strong algorithms mentioned above,  we do not report its results since Algorithm 2 requires a large amount of projections onto a feasible set which are not trivial for our test problems. 

 We describe the test details below. Computations were performed using Wolfram Mathematica 8.0 on an Intel Core i3-2348M \@2.3GHz running 64-bit Linux Mint 13. 

In all tests we take $\lambda = 1/(4L)$,  $k= 3$ in Algorithm 1 and $\a_n = 0.5$  in Algorithm 3. The projection onto a feasible set $C$ is performed using the cyclic projection method with error $\varepsilon$.  The time is measured in seconds using the intrinsic Mathematica function Timing. The termination criterion is $\n{x-P_C(x-\la Ax)} \leq \varepsilon$. 
The projection in Algorithms 1, 3, 4 onto the intersection $C_n\cap Q_n$ of two halfspaces is explicit and is given by Haugazeau formula (see \cite{baucomb} for more details).

The first example is very simple. The feasible set $C\subset \R^5$ is a random polygon with $10$ linear constraints and $Ax = x-u$ where  $u$ is a random point in $\R^5$.  It is clear that the solution $x^*$ of variational inequality \eqref{vip} is the point $P_C u$. For this problem we take $L=1$ and $\varepsilon = 0.01$. We study this problem for a different choice of starting points $x_0$. The results for a three different choices of starting point $x_0$ are shown in Table 1. 

\begin{center}
\begin{tabular}{cc|c|c|c|c|c|l}
\cline{2-7}
&  \multicolumn{2}{|c|}{Ex. 1} &  \multicolumn{2}{c|}{Ex. 2} & \multicolumn{2}{c|}{Ex. 3} \\ 
\cline{2-7} \multicolumn{1}{c|}{} & Iter. & Time  &  Iter. & Time & Iter. & Time & \\ \cline{1-7}
\multicolumn{1}{ |c| }{Alg. 1} & 307 & 61.1&271  &52.1 &137  &29.4 \\ \cline{1-7}
\multicolumn{1}{ |c| }{Alg. 3}  &317 &60.4 & 315& 58.1& 276 &52.5 \\ \cline{1-7}
\multicolumn{1}{ |c| }{Alg. 4} & 518 &97.1 & 312& 58.9 &221 & 42.9 \\ \cline{1-7}
\end{tabular}
\\ \vspace{0.2cm}
\title{Table 1}
\end{center}
Of course this result depends on a feasible set. However our purpose was only to study some examples where Algorithm 1 is effective. 
 
The second problem is the following. We take $Ax = Mx + q$ with the matrix $M$ randomly generated as suggested in \cite{hphard}:
$$M = AA^T + B + D,$$ where every entry of the $n \times n$ matrix $A$ and of the $n\times n$ skew-symmetric matrix $B$ is uniformly generated from $(-5, 5)$, and every diagonal entry of the $n\times n$ diagonal $D$ is uniformly generated from $(0, 0.3)$ (so $M$ is positive definite), with every entry of $q$ uniformly generated from $(-500, 0)$. The feasible set is 
$$C = \{x\in \R^m_+ \mid x_1 + x_2 +\dots + x_m = m\},$$ and the starting point is $x_0 = (1,\dots, 1)$. 
For this problem we take $L = \n{M}$. We have generated three random samples with different choice of $M$ and $q$ for both cases $n=2$ and $n=10$, the results are tabulated in Table 2 and 3.

For $n=2$,   $\varepsilon = 0.001$

\begin{center}
\begin{tabular}{cc|c|c|c|c|c|l}
\cline{2-7}
&  \multicolumn{2}{|c|}{Ex. 1} &  \multicolumn{2}{c|}{Ex. 2} & \multicolumn{2}{c|}{Ex. 3} \\ 
\cline{2-7} \multicolumn{1}{c|}{} & Iter. & Time  &  Iter. & Time & Iter. & Time & \\ \cline{1-7}
\multicolumn{1}{|c| }{Alg. 1} &245& 4.9  &216 & 34.7 & 385 & 463.7 & \\ \cline{1-7}
\multicolumn{1}{|c| }{Alg. 3}  & 278 & 5.7 & 219 &35.1 & 349 &412.8 & \\ \cline{1-7}
\multicolumn{1}{|c| }{Alg. 4}  & 291 & 6.0 & 187 & 30.0 & 268& 320.0 &\\ \cline{1-7}
\end{tabular}
\\ \vspace{0.2cm}
\title{Table 2}
\end{center}

And for  $n=10$,  $\varepsilon = 0.01$
\begin{center}
\begin{tabular}{cc|c|c|c|c|c|l}
\cline{2-7}
&  \multicolumn{2}{|c|}{Ex. 1} &  \multicolumn{2}{c|}{Ex. 2} & \multicolumn{2}{c|}{Ex. 3} \\ 
\cline{2-7} \multicolumn{1}{c|}{} & Iter. & Time  &  Iter. & Time & Iter. & Time & \\ \cline{1-7}
\multicolumn{1}{ |c| }{Alg. 1} &4267 & 123.6  &2728  &32.7  & 554 & 15.3 & \\ \cline{1-7}
\multicolumn{1}{ |c| }{Alg. 3}  &2571 &75.9 & 1826& 29.2  & 409 & 12.1 & \\ \cline{1-7}
\multicolumn{1}{ |c| }{Alg. 4} & 2564 &76.4 & 1632&  48.3 & 522 & 16.0&\\ \cline{1-7}
\end{tabular}
\\ \vspace{0.2cm}
\title{Table 3}
\end{center}
As one can see, on our examples Algorithm 1 has competitive performance. We caution, however, that this study is very preliminary.

\bibliographystyle{siam}     
\bibliography{publications}

\end{document}